\newtheorem{thm}{Theorem}[section]
\newtheorem{lem}[thm]{Lemma}
\newtheorem{prop}[thm]{Proposition}
\newcommand{\C}{{\mathbb C}}
\newcommand{\D}{{\mathbb D}}
\newcommand{\T}{{\mathbb T}}
\newcommand{\La}{\Lambda}
\newcommand{\f}{\frac}
\newcommand{\ov}{\overline}
\newcommand{\al}{\alpha}
\newcommand{\be}{\beta}
\newcommand{\de}{\delta}
\newcommand{\ze}{\zeta}
\newcommand{\ph}{\varphi}
\newcommand{\om}{\omega}
\newcommand{\const}{\text{\rm const}}
\numberwithin{equation}{section}
\title[Functions in Bloch-type spaces and their moduli]
{Functions in Bloch-type spaces\\ 
and their moduli}
\dedicatory{In memory of Victor Petrovich Havin}
\author{Konstantin M. Dyakonov}
\address{ICREA, BGSMath and Universitat de Barcelona, Departament de 
Matem\`atiques i Inform\`atica, Gran Via 585, E-08007 Barcelona, Spain}
\email{konstantin.dyakonov@icrea.cat}
\keywords{Holomorphic functions, Bloch-type spaces, Lipschitz spaces} 
\subjclass[2010]{30H05, 32A37, 32A38, 46J10.} 
\thanks{Supported in part by grants MTM2011-27932-C02-01, MTM2014-51834-P from El Ministerio de 
Econom\'ia y Competitividad (Spain) and grant 2014-SGR-289 from AGAUR (Generalitat de Catalunya).}
\begin{document}
\begin{abstract} Given a suitably regular nonnegative function $\om$ on $(0,1]$, let $\mathcal B_\om$ denote 
the space of all holomorphic functions $f$ on the unit ball $\mathbb B_n$ of $\C^n$ that satisfy 
$$|\nabla f(z)|\le C\f{\om(1-|z|)}{1-|z|},\qquad z\in\mathbb B_n,$$
with some fixed $C=C_f>0$. We obtain a new characterization of $\mathcal B_\om$ functions in terms of their 
moduli. 
\end{abstract}

\maketitle

\section{Introduction and results}

Let $\mathcal H(\mathbb B_n)$ denote the space of holomorphic functions on the ball 
$$\mathbb B_n:=\{z\in\C^n:\,|z|<1\}$$ 
(we write $|\cdot|$ for the usual Euclidean norm on $\C^n$). Talking about subclasses of $\mathcal H(\mathbb B_n)$, 
or just about function classes in general, we may single out two large families of spaces. First, there 
are {\it growth spaces} defined by imposing an explicit size condition, either integral or uniform, 
on the function's modulus. A growth space $X\subset\mathcal H(\mathbb B_n)$ will typically have 
the \lq\lq lattice property": whenever $f\in X$ and $g\in\mathcal H(\mathbb B_n)$ satisfy 
$|f|\ge|g|$ on $\mathbb B_n$, it follows that $g\in X$. This family contains the classical Hardy and 
Bergman spaces, various weighted $H^\infty$ spaces involving specific majorants on the modulus, etc. 
Secondly, there are {\it smoothness spaces} defined in terms of derivatives and/or differences that are built from 
the function itself (rather than from its modulus). Among the representatives of the latter family are the Lipschitz, 
Besov and Sobolev spaces, to mention a few. 

\par Rather surprisingly, it turns out that a number of (holomorphic) smoothness spaces nevertheless admit a 
fairly explicit description in terms of the moduli of their members. The conditions that arise should, of course, 
govern the oscillations of the function's modulus, not just its growth rate. For Lipschitz spaces, several such 
characterizations were obtained by the author in \cite{DActa} for the case of the disk $\D:=\mathbb B_1$. They 
were subsequently extended in \cite{DAdv} to $\mathbb B_n$, and in fact to more general domains in $\C^n$. We 
also cite \cite{DMich} in connection with holomorphic Besov spaces on $\D$. 

\par The purpose of this note is to provide a similar characterization for certain \lq\lq Bloch-type" spaces that 
result from a growth restriction on the gradient 
$$\nabla f=(\partial_1f,\dots,\partial_nf)$$ 
of a function $f\in\mathcal H(\mathbb B_n)$; here $\partial_j$ stands for the partial differentiation 
operator $\f{\partial}{\partial z_j}$. More precisely, given a (reasonably nice) positive function $\om$ on 
the interval $(0,1]$, the associated Bloch-type space $\mathcal B_\om=\mathcal B_\om(\mathbb B_n)$ consists, 
by definition, of the functions $f\in\mathcal H(\mathbb B_n)$ that obey the condition 
\begin{equation}\label{eqn:defblochtype}
|\nabla f(z)|\le C_f\f{\om(1-|z|)}{1-|z|},\qquad z\in\mathbb B_n,
\end{equation}
with some fixed constant $C_f>0$ on the right. 

\par To be more specific about the class of $\om$'s we have in mind, we now introduce the appropriate terminology. 
We say that a function $\om:(0,1]\to(0,\infty)$ is {\it moderate} if there is a constant $C>0$ with the 
following property: whenever $a\in(0,1]$ and $b\in(0,1]$ satisfy 
$$\f12\le\f ab\le2,$$
we have 
$$\f1C\le\f{\om(a)}{\om(b)}\le C.$$ 
In particular, if $\om$ is a nondecreasing (resp., nonincreasing) positive function such that the ratio 
$\om(2t)/\om(t)$ is bounded above (resp., below) for $0<t\le\f12$, then $\om$ is moderate. 

\par If $\om(t)$ tends to $0$ fast enough as $t\to0^+$, so that $\om(t)=o(t)$, then no nonconstant function 
is in $\mathcal B_\om$. (Obviously, this is not the case we are interested in.) By contrast, $\mathcal B_\om$ 
is sure to be nontrivial once we assume that the function $t\mapsto\om(t)/t$ is nonincreasing. Now, if $\om$ is 
a nondecreasing function with the latter property, and if $\om$ is \lq\lq not too slow" near $0$ in the sense that 
$$\int_0^\de\f{\om(t)}t\,dt\le\const\cdot\om(\de),\qquad0<\de<1$$
(such $\om$'s are called {\it fast majorants} in \cite{DAdv}), then $\mathcal B_\om$ becomes the holomorphic 
{\it Lipschitz space} $\La_\om$ associated with $\om$; its members are precisely the functions whose modulus 
of continuity is dominated by $\om$. The special case $\om(t)=t^\al$ ($0<\al\le1$) corresponds to the classical 
Lipschitz space of order $\al$. 

\par When $\om(t)\equiv1$, the space $\mathcal B_\om$ reduces to the usual {\it Bloch space} $\mathcal B$. The 
{\it little Bloch space} $\mathcal B_0$, formed by the functions $f\in\mathcal H(\mathbb B_n)$ with 
\begin{equation}\label{eqn:deflitbloch}
|\nabla f(z)|\cdot(1-|z|)\to0\quad\text{\rm as}\quad|z|\to1^-,
\end{equation}
can be written as $\bigcup_\om\mathcal B_\om$, where $\om$ runs through the collection of all (moderate) functions 
with $\lim_{t\to0^+}\om(t)=0$. On the other hand, if $\om$ satisfies $\lim_{t\to0^+}\om(t)=\infty$, then the 
associated $\mathcal B_\om$ space is clearly larger than $\mathcal B$ and possesses a kind of \lq\lq negative order" 
smoothness. Furthermore, if $\om(t)$ happens to grow rapidly enough as $t\to0^+$, then $\mathcal B_\om$ becomes a 
growth space, meaning that it can be described by a size condition on $|f|$. For instance, letting $\om(t)=t^{-\be}$ 
with some $\be>0$, one may rewrite \eqref{eqn:defblochtype} in the simpler form 
$$|f(z)|\le\f{\const}{(1-|z|)^\be},\qquad z\in\mathbb B_n.$$ 
However, the case of a milder (say, logarithmic) growth rate of $\om$ near $0$ is more delicate: the resulting 
$\mathcal B_\om$ space is then closer in nature to $\mathcal B$, and it is no longer describable in terms of 
a growth estimate on $|f(z)|$ as $|z|\to1^-$. 

\par Finally, a bit of notation will be needed. For a point $z\in\mathbb B_n$, we put 
$$d_z:=1-|z|$$ 
and let $B_z$ denote the (Euclidean) open ball with center $z$ and radius $d_z/2$. Also, with a function 
$f\in\mathcal H(\mathbb B_n)$ and a point $z\in\mathbb B_n$ we associate the quantity 
$$M_f(z):=\sup\{|f(w)|:\,w\in B_z\}.$$
Next, we introduce the zero set 
$$\mathcal Z_f:=\{\ze\in\mathbb B_n:f(\ze)=0\}$$ 
and define 
$$E_f:=\{z\in\mathbb B_n:\,B_z\cap\mathcal Z_f\ne\emptyset\}.$$ 
Thus $E_f$ can be viewed as a neighborhood of $\mathcal Z_f$; and if $f$ happens to be zero-free (i.e., 
$\mathcal Z_f=\emptyset$), then we also have $E_f=\emptyset$. 

\par Our main result is as follows. When stating it, and later on, we write $E:=E_f$ and 
$E^c:=\mathbb B_n\setminus E_f$. 

\begin{thm}\label{thm:mare} Given $f\in\mathcal H(\mathbb B_n)$ and a moderate function $\om:(0,1]\to(0,\infty)$, 
the following are equivalent. 

\smallskip{\rm(i)} $f\in\mathcal B_\om$. 

\smallskip{\rm(ii)} There is a constant $C_1>0$ such that 
$$\sup\left\{|f(z_1)|-|f(z_2)|:\,z_1,z_2\in B_z\right\}\le C_1\cdot\om(d_z),\qquad z\in\mathbb B_n.$$

\smallskip{\rm(iii)} There is a constant $C_2>0$ such that 
$$\chi_E(z)\cdot M_f(z)+|f(z)|\log\f{M_f(z)}{|f(z)|}\le C_2\cdot\om(d_z),
\qquad z\in\mathbb B_n$$
(the second term on the left being understood as $0$ if $z\in\mathcal Z_f$).

\smallskip{\rm(iv)} There is a constant $C_3>0$ such that 
$$\chi_E(z)\cdot M_f(z)+\chi_{E^c}(z)\cdot|f(z)|\log\f{M_f(z)}{|f(z)|}\le C_3\cdot\om(d_z),
\qquad z\in\mathbb B_n.$$
\end{thm}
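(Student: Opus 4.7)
The plan is to prove the four conditions equivalent by the cyclic chain (i)$\Rightarrow$(ii)$\Rightarrow$(iii)$\Rightarrow$(iv)$\Rightarrow$(i).

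For (i)$\Rightarrow$(ii), I would simply integrate $\nabla f$ along the segment joining $z_1$ and $z_2$, which lies in the convex ball $B_z$. For $w\in B_z$ one has $d_z/2\le d_w\le 3d_z/2$, and since $\om$ is moderate, $\om(d_w)\asymp\om(d_z)$; hence $|\nabla f(w)|\le C\om(d_z)/d_z$. As $|z_1-z_2|\le d_z$, this yields $\bigl||f(z_1)|-|f(z_2)|\bigr|\le|f(z_1)-f(z_2)|\le C\om(d_z)$.

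The implication (ii)$\Rightarrow$(iii) splits into cases. If $z\in E$, pick $\zeta\in B_z\cap\mathcal Z_f$; applying (ii) with $z_2=\zeta$ and $z_1\in B_z$ arbitrary shows $M_f(z)\le C_1\om(d_z)$, and the elementary bound $u\log(1/u)\le 1/e$ on $(0,1]$ then forces $|f(z)|\log(M_f(z)/|f(z)|)\le M_f(z)/e$. If $z\in E^c$, then $f$ is zero-free on $B_z$; letting $z_1\in B_z$ approach a maximizer of $|f|$ on $B_z$ and setting $z_2=z$ in (ii), one gets $M_f(z)-|f(z)|\le C_1\om(d_z)$, after which the concavity estimate $\log(1+x)\le x$ gives $|f(z)|\log(M_f(z)/|f(z)|)\le M_f(z)-|f(z)|\le C_1\om(d_z)$. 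The step (iii)$\Rightarrow$(iv) is immediate since the left-hand side of (iv) is dominated termwise by that of (iii).

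The main obstacle is (iv)$\Rightarrow$(i), where a pointwise gradient bound must be recovered from an oscillation condition on $|f|$. On $E$ this is easy: (iv) gives $M_f(z)\le C_3\om(d_z)$, and Cauchy's inequality on $B_z$ (radius $d_z/2$) yields $|\nabla f(z)|\le CM_f(z)/d_z$. The delicate part is on $E^c$, where $f$ is zero-free on the simply connected ball $B_z$, so a single-valued branch $g:=\log f$ is holomorphic there with $\Re g=\log|f|\le\log M_f(z)$. Restricting $g$ to each complex line $\zeta\mapsto g(z+\zeta e)$, $|\zeta|<d_z/2$, and applying the one-variable Borel--Carath\'eodory inequality to $g-g(z)$, whose real part is bounded above by $\log(M_f(z)/|f(z)|)$ and which vanishes at $z$, I would obtain
$$|\nabla g(z)|\le\f{C}{d_z}\log\f{M_f(z)}{|f(z)|}.$$
Since $\nabla f=f\,\nabla g$, combining this with the $E^c$-part of (iv) finally produces
$$|\nabla f(z)|=|f(z)|\cdot|\nabla g(z)|\le\f{C}{d_z}|f(z)|\log\f{M_f(z)}{|f(z)|}\le\f{CC_3}{d_z}\om(d_z),$$
which is condition (i).
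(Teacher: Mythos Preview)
Your proof is correct, and the overall cycle matches the paper's. The implications (i)$\Rightarrow$(ii), (ii)$\Rightarrow$(iii), (iii)$\Rightarrow$(iv), and the $E$-part of (iv)$\Rightarrow$(i) are handled essentially as in the paper (your case split in (ii)$\Rightarrow$(iii) is slightly redundant---the inequality $a\log(b/a)\le b-a$ already covers $z\in E$---but it does no harm).

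The one genuine difference is the $E^c$-part of (iv)$\Rightarrow$(i). The paper isolates a separate Schwarz--Pick type lemma for nonvanishing self-maps of the ball, namely
\[
|\nabla g(0)|\le 2|g(0)|\log\frac{1}{|g(0)|},
\]
which it proves via the Poisson representation of outer functions in $\mathbb D$ and then transports to $\mathbb B_n$ through slicing and ball automorphisms; this lemma is applied to $g_z(w)=f(z+\tfrac{d_z}{2}w)/M_f(z)$. You instead take a branch $g=\log f$ on $B_z$ and apply the one-variable Borel--Carath\'eodory inequality on each complex line through $z$, which gives $|\nabla g(z)|\le\frac{4}{d_z}\log(M_f(z)/|f(z)|)$ directly. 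The two routes are equivalent in content (both reduce to the Schwarz lemma via the conformal map $h\mapsto h/(2A-h)$ of a half-plane to the disk), but your argument is more self-contained: it avoids the outer-function machinery and the automorphism computation. The paper's formulation, on the other hand, packages the estimate as a standalone lemma of independent interest.
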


\par We emphasize that conditions (ii), (iii) and (iv) above are indeed expressed in terms of $|f|$ alone. 
Conditions (iii) and (iv) are new even in the Lipschitz case, while (ii) is perhaps not too far from what was 
known previously. In fact, the implications (i)$\implies$(ii)$\implies$(iii)$\implies$(iv) are either elementary 
or trivial (or both), so the main effort consists in verifying that the last -- and formally weakest -- condition 
on $|f|$ is actually sufficient to ensure that $f\in\mathcal B_\om$. In a sense, (iv) seems to be the weakest 
possible condition on the modulus that does the job. We also remark that conditions (iii) and (iv), which 
invlove \lq\lq logarithmic oscillations", are usually easier to check than (ii) or similar Lipschitz-type 
conditions on $|f|$, such as those that arose in \cite{DActa,DAdv} in the $\La_\om$ setting. 

\par Given a nonvanishing function $f\in\mathcal H(\mathbb B_n)$, we have $E=\emptyset$ and $E^c=\mathbb B_n$, 
in which case conditions (iii) and (iv) become the same. Each of these reduces to saying that 
\begin{equation}\label{eqn:logosc}
|f(z)|\log\f{M_f(z)}{|f(z)|}\le\const\cdot\om(d_z),\qquad z\in\mathbb B_n.
\end{equation}

\par To see a consequence of this criterion, let us now consider a nonvanishing holomorphic function $F$ on 
the unit disk $\D:=\mathbb B_1$ in $\C$. (In what follows, we also use the notation $\T:=\partial\D$ for the 
unit circle, and $m$ for the normalized arclength measure on $\T$.) Assume, in addition, that $F$ lies in some 
Hardy space $H^p=H^p(\D)$ with $0<p\le\infty$. It is well known (see, e.g., \cite[Chapter II]{G}) that, except 
for a constant factor of modulus $1$, such a function is necessarily of the form 
\begin{equation}\label{eqn:genform} 
F(z)=\exp\left(\int_\T\f{\ze+z}{\ze-z}\,d\nu(\ze)\right),\qquad z\in\D, 
\end{equation}
$\nu$ being a signed measure on $\T$. This measure can further be written as 
\begin{equation}\label{eqn:measurenu}
d\nu=\log\psi\,dm-d\mu_s,
\end{equation}
where $\psi$ is a nonnegative function satisfying 
\begin{equation}\label{eqn:condpsi}
\psi\in L^p(\T,m)\quad\text{\rm and}\quad\log\psi\in L^1(\T,m)
\end{equation}
(in fact, $\psi(\ze)=\lim_{r\to1^-}|F(r\ze)|$ for $m$-almost every $\ze\in\T$), while $\mu_s$ is a finite positive 
measure on $\T$ singular with respect to $m$. When $\mu_s=0$, $F$ becomes the {\it outer function} with modulus $\psi$, 
whereas taking $\psi\equiv1$ one gets the {\it singular inner function} associated with $\mu_s$; again, we refer 
to \cite[Chapter II]{G} for these matters. 

\par It is straightforward to verify that 
$$\log|F(z)|=\int_\T\f{1-|z|^2}{|\ze-z|^2}\,d\nu(\ze)=:\mathcal P\nu(z)$$ 
for $z\in\D$; here $\mathcal P$ stands for the Poisson integral operator. Therefore, the next result comes out 
readily upon applying \eqref{eqn:logosc} to $F$ in place of $f$. The space $\mathcal B_\om$ in the statement below 
should be understood as $\mathcal B_\om(\D)$. Likewise, the symbols $d_z$ and $B_z$ will have the same meaning 
as before, but restricted to dimension $n=1$. 

\begin{thm}\label{thm:secre} Suppose $F$ is defined by \eqref{eqn:genform}, with $\nu$ as in \eqref{eqn:measurenu}. 
Given $p\in(0,\infty]$ and a moderate function $\om:(0,1]\to(0,\infty)$, the following are equivalent. 
\par{\rm(a)} $F\in\mathcal B_\om\cap H^p$. 
\par{\rm(b)} $\psi$ satisfies \eqref{eqn:condpsi}, and there is a constant $C>0$ such that 
$$\exp\left(\mathcal P\nu(z)\right)\cdot\left[\mathcal P\nu(w)-\mathcal P\nu(z)\right]\le C\cdot\om(d_z)$$
whenever $z\in\D$ and $w\in B_z$. 
\end{thm}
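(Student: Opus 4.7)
The plan is to derive Theorem \ref{thm:secre} as an essentially direct consequence of the zero-free case of Theorem \ref{thm:mare}, together with standard $H^p$ function theory. Since $F$ is defined by \eqref{eqn:genform} as an exponential, it is zero-free on $\D$, so $\mathcal Z_F=\emptyset$ and $E_F=\emptyset$. Consequently, Theorem \ref{thm:mare} reduces to the statement that $F\in\mathcal B_\om$ is equivalent to the logarithmic oscillation bound \eqref{eqn:logosc}.

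The first step is a direct substitution. Using the identity $\log|F(z)|=\mathcal P\nu(z)$ noted just before the theorem, one has $|F(z)|=\exp(\mathcal P\nu(z))$; continuity (indeed, harmonicity) of $\mathcal P\nu$ on $\D$ gives
$$\log M_F(z)=\sup_{w\in B_z}\mathcal P\nu(w),$$
and therefore
$$|F(z)|\log\f{M_F(z)}{|F(z)|}=\exp(\mathcal P\nu(z))\cdot\sup_{w\in B_z}\bigl[\mathcal P\nu(w)-\mathcal P\nu(z)\bigr].$$
Requiring this last quantity to be $\le\const\cdot\om(d_z)$ is plainly equivalent to the pointwise inequality in (b): the pointwise form yields the sup form by taking a supremum over $w\in B_z$, while the converse is immediate. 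Note that the bracketed quantity is automatically $\ge 0$, since $w=z$ is admissible.

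The second step reconciles the $H^p$ condition with \eqref{eqn:condpsi}. If $F\in H^p$ with $F\not\equiv 0$, classical boundary-value theory shows that $\psi:=\lim_{r\to 1^-}|F(r\cdot)|$ lies in $L^p(\T,m)$ and $\log\psi\in L^1(\T,m)$. Conversely, assuming \eqref{eqn:condpsi}, one writes
$$|F(z)|=\exp\bigl(\mathcal P(\log\psi)(z)-\mathcal P\mu_s(z)\bigr)\le\exp\bigl(\mathcal P(\log\psi)(z)\bigr),$$
using $\mu_s\ge 0$. The right-hand side is the modulus of the outer function with boundary data $\psi\in L^p$, which lies in $H^p$; hence so does $F$. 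Combining the two steps yields (a)$\iff$(b). The argument is essentially a direct translation through the formula $\log|F|=\mathcal P\nu$, so there is no substantial obstacle here; the real content has already been absorbed into Theorem \ref{thm:mare}.
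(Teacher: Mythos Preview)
Your proposal is correct and follows precisely the route indicated in the paper: since $F$ is zero-free, Theorem \ref{thm:mare} reduces to the logarithmic oscillation criterion \eqref{eqn:logosc}, and the substitution $\log|F|=\mathcal P\nu$ (together with the monotonicity of $\exp$, giving $\log M_F(z)=\sup_{w\in B_z}\mathcal P\nu(w)$) converts \eqref{eqn:logosc} into the inequality in (b). Your handling of the $H^p$ side via the domination $|F|\le|\mathcal O_\psi|$ by the outer function is the standard argument the paper alludes to when it introduces \eqref{eqn:condpsi}.
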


\par In particular, letting $\mu_s=0$, one arrives at a criterion for an outer function to be in 
$\mathcal B_\om\cap H^p$. In the Lipschitz case, when $\mathcal B_\om(=\mathcal B_\om\cap H^p)=\La_\om$, the 
result is also new and supplements previous characterizations of the outer functions in $\La_\om=\La_\om(\D)$ 
that were found by Shirokov \cite{ShiTr, Shi} and by the author \cite{DActa, DJAM}. On the other hand, 
letting $\psi\equiv1$ (and $p=\infty$, say), one obtains a description of the singular inner functions in 
$\mathcal B_\om$; the class of such singular inner functions is nontrivial when $\om(t)$ tends to $0$ slowly 
enough as $t\to0^+$. 

\par One might also consider the \lq\lq little oh" analogues of the $\mathcal B_\om$ spaces and come up with 
the corresponding \lq\lq little oh" version of the theorem above. We restrict ourselves to mentioning the case 
of $\mathcal B_0\cap H^\infty$, where $\mathcal B_0=\mathcal B_0(\D)$ is the little Bloch space defined by 
condition \eqref{eqn:deflitbloch}, this time with $|f'(z)|$ in place of $|\nabla f(z)|$. 

\begin{prop}\label{prop:littlebloch} Suppose $F$ is a function of the form \eqref{eqn:genform}, with $\nu$ as 
in \eqref{eqn:measurenu}. In order that $F\in\mathcal B_0\cap H^\infty$, it is necessary and sufficient 
that $\psi\in L^\infty(\T,m)$ and 
$$\sup\left\{\exp\left(\mathcal P\nu(z)\right)\cdot\left[\mathcal P\nu(w)-\mathcal P\nu(z)\right]:\,
w\in B_z\right\}\to0$$ 
as $|z|\to1^-$. 
\end{prop}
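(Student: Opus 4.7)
The approach is to derive Proposition~\ref{prop:littlebloch} directly from Theorem~\ref{thm:secre}, via the fact (recorded after \eqref{eqn:deflitbloch}) that $\mathcal B_0=\bigcup_\om\mathcal B_\om$, where $\om$ ranges over moderate majorants with $\lim_{t\to0^+}\om(t)=0$. The bridge is the identity
$$|F(z)|\log\f{M_F(z)}{|F(z)|}=\exp\bigl(\mathcal P\nu(z)\bigr)\cdot\sup_{w\in B_z}\bigl[\mathcal P\nu(w)-\mathcal P\nu(z)\bigr],$$
which holds because $F$ from \eqref{eqn:genform} is zero-free on $\D$ with $\log|F|=\mathcal P\nu$; thus the sup appearing in the proposition is precisely the left-hand side of \eqref{eqn:logosc} for $f=F$. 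Also, $F\in H^\infty$ is equivalent to $\psi\in L^\infty(\T,m)$ by the standard boundary theory for $H^p$.

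For necessity, suppose $F\in\mathcal B_0\cap H^\infty$. The nondecreasing envelope
$$\phi(t):=\sup\bigl\{|F'(z)|(1-|z|):\,1-|z|\le t\bigr\},\qquad t\in(0,1],$$
tends to $0$ at the origin by the little-Bloch condition. A standard regularization
$$\om(t):=\sup_{0<s\le1}\phi(s)\min(t/s,1)$$
produces a nondecreasing function dominating $\phi$, tending to $0$, and satisfying $\om(2t)\le2\om(t)$, hence moderate. Consequently $F\in\mathcal B_\om$, and Theorem~\ref{thm:secre} (with $p=\infty$) delivers
$$\exp(\mathcal P\nu(z))\bigl[\mathcal P\nu(w)-\mathcal P\nu(z)\bigr]\le C\,\om(d_z),\qquad w\in B_z,$$
whose right-hand side tends to $0$ as $|z|\to1^-$.

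For sufficiency, put
$$H(z):=\sup\bigl\{\exp(\mathcal P\nu(z))[\mathcal P\nu(w)-\mathcal P\nu(z)]:\,w\in B_z\bigr\},$$
which by hypothesis satisfies $H(z)\to0$ as $|z|\to1^-$. Applying the very same regularization to the nondecreasing envelope $t\mapsto\sup\{H(z):1-|z|\le t\}$ yields a moderate majorant $\om$, with $\om(t)\to0$ and $H(z)\le\om(d_z)$ throughout $\D$. Combined with $\psi\in L^\infty$, Theorem~\ref{thm:secre} then places $F$ in $\mathcal B_\om\cap H^\infty\subset\mathcal B_0\cap H^\infty$.

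The argument is essentially soft: both directions reduce, via Theorem~\ref{thm:secre}, to the routine task of upgrading a function vanishing at the boundary to a moderate majorant with the same decay. I expect no genuine obstacle beyond verifying the moderate property of the envelope $\om$ above, which follows at once from the elementary estimate $\om(2t)\le2\om(t)$ together with monotonicity.
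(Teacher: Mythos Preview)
Your argument is correct. The paper does not supply an explicit proof of Proposition~\ref{prop:littlebloch}; it only remarks that one may pass to the ``little oh'' analogue of Theorem~\ref{thm:secre}, and separately records (after \eqref{eqn:deflitbloch}) the decomposition $\mathcal B_0=\bigcup_\om\mathcal B_\om$ over moderate $\om$ with $\om(0^+)=0$. Your proof is precisely a rigorous implementation of this second remark: rather than rerunning the estimates of Theorem~\ref{thm:mare} in ``little oh'' form, you manufacture a suitable moderate $\om$ (via the envelope--regularization $\om(t)=\sup_{0<s\le1}\phi(s)\min(t/s,1)$) and invoke Theorem~\ref{thm:secre} once in each direction. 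This buys you a genuinely shorter argument, since nothing from Sections~2--3 needs to be repeated.

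Two small points worth making explicit in a final write-up: (1) the regularized $\om$ must be strictly positive to qualify as moderate, which fails only when $F$ is constant (a trivial case you can dispose of separately, or fix by adding $t$ to $\om$); (2) in the sufficiency direction you use that $H(z)$ is bounded on $\D$, which follows from $\psi\in L^\infty$ (hence $F\in H^\infty$) together with the elementary bound $a\log(b/a)\le b/e$ for $0<a\le b$. Neither affects the validity of the approach.
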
 

\par It might be interesting to compare this with Bishop's characterization of $\mathcal B_0\cap H^\infty$, 
as given in \cite{B}. 

\par Postponing the proof of Theorem \ref{thm:mare} to the last section, we shall begin by establishing 
a preliminary result (see Section 2 below), namely a certain Schwarz--Pick type lemma, to lean upon.  
The idea of using this kind of technique for similar purposes goes back to Pavlovi\'c's paper \cite{P}, 
where the classical Schwarz(--Pick) lemma was employed to give a simple proof of the author's earlier 
result from \cite{DActa} on the moduli of holomorphic Lipschitz functions. 
Here, we use a refined version of the Schwarz--Pick inequality that is valid for nonvanishing 
functions only (even though the function $f$ of Theorem \ref{thm:mare} may have zeros). 
This allows us to arrive at the required estimate on $|\nabla f(z)|$ for $z\notin E_f$, while the case 
of $z\in E_f$ is treated separately; see the proof of the (iv)$\implies$(i) part in Section 3.

\section{A Schwarz--Pick type lemma for nonvanishing functions} 

The familiar Schwarz--Pick lemma (see, e.g., \cite[Chapter I]{G}) tells us that if $g$ is a holomorphic 
self-map of the unit disk $\D$ (in $\C$), then 
$$|g'(z)|\le\f{1-|g(z)|^2}{1-|z|^2}$$ 
for all $z\in\D$. See also \cite[Chapter 8]{R} for extensions of this to $\mathbb B_n$ with $n>1$. 
Now, it turns out that if $g$ happens to be zero-free, then a better estimate is possible; 
the refinement is given (in the $\mathbb B_n$ setting) by Lemma \ref{lem:spnonvan} below. In the case 
of $\D$, the result is essentially known. For instance, it can be deduced from the generalized Schwarz--Pick 
lemma due to Ahlfors; see Theorem 1-7 in \cite[Chapter 1]{A}. However, since the required version -- which 
should also work for $\mathbb B_n$ -- does not seem to be readily available in the literature, we have 
chosen to provide a complete self-contained proof thereof. 

\begin{lem}\label{lem:spnonvan} Suppose $g\in\mathcal H(\mathbb B_n)$ is a function satisfying $0<|g(z)|\le1$ 
for all $z\in\mathbb B_n$. Then 
\begin{equation}\label{eqn:schpi}
|\nabla g(z)|\le\f2{1-|z|^2}\,|g(z)|\log\f{1}{|g(z)|},\qquad z\in\mathbb B_n.
\end{equation}
In particular, 
\begin{equation}\label{eqn:schpizero}
|\nabla g(0)|\le2|g(0)|\log\f1{|g(0)|}.
\end{equation}
\end{lem}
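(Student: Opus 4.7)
The plan is to establish the case $z=0$, namely inequality (\ref{eqn:schpizero}), first, and then deduce the general estimate (\ref{eqn:schpi}) by pre-composing $g$ with an automorphism of $\mathbb B_n$ carrying $0$ to $z$. Constants trivially satisfy (\ref{eqn:schpi}); for any non-constant $g$ with $|g|\le 1$, the maximum principle forces $|g|<1$ strictly on $\mathbb B_n$, so we may and do assume $0<|g(z)|<1$ throughout.

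For the case $z=0$, simple connectivity of $\mathbb B_n$ allows us to define $h:=\log g\in\mathcal H(\mathbb B_n)$, which obeys $\Re h=\log|g|<0$. The identity $\p_j g = g\,\p_j h$ gives $|\nabla g(0)| = |g(0)|\cdot|\nabla h(0)|$, so (\ref{eqn:schpizero}) reduces to the half-plane estimate $|\nabla h(0)|\le -2\Re h(0)$. To prove the latter, assume $\nabla h(0)\ne 0$ (otherwise trivial) and set $e:=\ov{\nabla h(0)}/|\nabla h(0)|$, a unit vector in $\C^n$. The slice $\psi(\la):=h(\la e)$ is then a holomorphic map $\D\to\{\Re w<0\}$ satisfying $\psi'(0)=|\nabla h(0)|$. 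Composing $\psi$ with the conformal map $w\mapsto(w-\psi(0))/(w+\ov{\psi(0)})$ from the left half-plane to $\D$ (which sends $\psi(0)$ to $0$) and applying the ordinary Schwarz lemma yields the half-plane Schwarz--Pick bound $|\psi'(0)|\le -2\Re\psi(0)$, as needed.

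For a general $z\in\mathbb B_n$, after a unitary rotation -- which preserves both sides of (\ref{eqn:schpi}) -- we may assume $z=(|z|,0,\dots,0)$. Let $\ph_z$ denote the standard involutive automorphism of $\mathbb B_n$ interchanging $0$ and $z$, and put $G:=g\circ\ph_z$. Then $G$ is holomorphic, zero-free, and bounded by $1$ on $\mathbb B_n$, with $G(0)=g(z)$, so the case at the origin applied to $G$ gives $|\nabla G(0)|\le 2|g(z)|\log(1/|g(z)|)$. An explicit computation from the standard formula for $\ph_z$ shows that, in the chosen coordinates,
$$\ph_z'(0)=\operatorname{diag}\bigl(-(1-|z|^2),\,-\sqrt{1-|z|^2},\,\dots,\,-\sqrt{1-|z|^2}\bigr).$$
Since $(1-|z|^2)^2\le 1-|z|^2$, the chain rule $\nabla G(0)=\nabla g(z)\cdot\ph_z'(0)$ then forces $|\nabla G(0)|\ge(1-|z|^2)|\nabla g(z)|$; combining this with the previous bound yields (\ref{eqn:schpi}).

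The main technical point is the chain-rule bookkeeping in the last step: the derivative $\ph_z'(0)$ contracts by a factor of $1-|z|^2$ in the radial direction and by $\sqrt{1-|z|^2}$ in each tangential direction, and it is the smaller \emph{radial} factor that governs $|\nabla G(0)|$ from below and produces the sharp $(1-|z|^2)^{-1}$ denominator in (\ref{eqn:schpi}); using only the operator norm of $\ph_z'(0)$ would give the weaker $(1-|z|^2)^{-1/2}$ bound.
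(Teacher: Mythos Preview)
Your proof is correct, and its overall architecture---establish \eqref{eqn:schpizero} first, then transport it to a general point via an involutive automorphism $\ph_z$---matches the paper's. The two proofs diverge, however, in how the origin estimate is obtained. The paper first treats $n=1$ by writing $g$ (after a dilation) as an outer function and differentiating its Poisson--Herglotz representation, then slices to reach $n>1$. Your route is more elementary: you pass to $h=\log g$ (available globally by simple connectivity), reduce \eqref{eqn:schpizero} to the half-plane Schwarz--Pick bound $|\psi'(0)|\le -2\Re\psi(0)$ for a slice $\psi:\D\to\{\Re w<0\}$, and verify the latter by composing with the M\"obius map $w\mapsto(w-\psi(0))/(w+\ov{\psi(0)})$ and invoking the ordinary Schwarz lemma. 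This avoids the outer-function machinery and the $g_r$ approximation entirely.

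In the automorphism step the two arguments are equivalent, just phrased dually. You bound $|\nabla G(0)|$ from below by the smallest singular value $1-|z|^2$ of $\ph_z'(0)$; the paper instead writes $\nabla g(z)=\nabla G(0)\cdot\ph_z'(z)$, using $\ph_z\circ\ph_z=\text{id}$, and bounds from above by the operator norm $\|\ph_z'(z)\|=(1-|z|^2)^{-1}$. Since $\ph_z'(z)=(\ph_z'(0))^{-1}$, these are the same computation. So your final caveat---that ``using only the operator norm'' would lose a factor---applies to $\ph_z'(0)$ but not to its inverse; the paper gets the sharp constant precisely by taking the operator norm of $\ph_z'(z)$.
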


\begin{proof} First let us consider the case $n=1$. Thus, $g$ is currently supposed to be a holomorphic 
function on the disk $\D:=\mathbb B_1$ satisfying $0<|g|\le1$ there. We may furthermore assume that $g$ is an 
outer function. (Otherwise, replace $g$ by $g_r$ with $0<r<1$, where $g_r(z):=g(rz)$, and then let $r\to1^-$.) 
This last assumption means that the (nonpositive) harmonic function $h:=\log|g|$ is the Poisson integral 
of its boundary values: 
\begin{equation}\label{eqn:poisson}
h(z)=\int_\T\f{1-|z|^2}{|\ze-z|^2}h(\ze)dm(\ze)=:\mathcal P\left(h|_\T\right)(z),\qquad z\in\D,
\end{equation}
while $g$ itself is of the form 
\begin{equation}\label{eqn:outer}
g(z)=\exp\left(\int_\T\f{\ze+z}{\ze-z}h(\ze)dm(\ze)\right),\qquad z\in\D.
\end{equation}
Differentiating \eqref{eqn:outer} gives 
\begin{equation}\label{eqn:diffouter}
g'(z)=g(z)\cdot U(z),
\end{equation}
where 
$$U(z):=\int_\T\f{2\ze}{(\ze-z)^2}h(\ze)dm(\ze).$$ 
Now, since $0\le -h=|h|$ almost everywhere on $\T$, we have 
\begin{equation*}
\begin{aligned}
|U(z)|&\le -\int_\T\f2{|\ze-z|^2}h(\ze)dm(\ze)\\
&=-\f2{1-|z|^2}\mathcal P\left(h|_\T\right)(z)\\
&=-\f2{1-|z|^2}h(z),
\end{aligned}
\end{equation*}
where the last step relies on \eqref{eqn:poisson}. In conjunction with \eqref{eqn:diffouter}, this yields 
\begin{equation*}
\begin{aligned}
|g'(z)|&=|g(z)|\cdot|U(z)|\\
&\le-\f{2|g(z)|}{1-|z|^2}h(z)\\
&=\f{2|g(z)|}{1-|z|^2}\log\f1{|g(z)|}.
\end{aligned}
\end{equation*}
We have thereby established \eqref{eqn:schpi}, and in particular \eqref{eqn:schpizero}, in dimension $n=1$. 
\par Our next step is to prove \eqref{eqn:schpizero} in the case $n>1$. Assuming that $\nabla g(0)\ne0$ (otherwise 
the inequality is trivial), we consider the unit vector 
$$\ze=\nabla g(0)/|\nabla g(0)|=\f1{|\nabla g(0)|}\left(\partial_1g(0),\dots,\partial_n g(0)\right)$$ 
and put 
$$G(w):=g(w\ov\ze),\qquad w\in\D.$$ 
Because $G$ is a holomorphic function on $\D$ with $0<|G|\le1$ and 
$$G'(0)=\langle\nabla g(0),\ze\rangle=|\nabla g(0)|,$$ 
where $\langle\cdot,\cdot\rangle$ denotes the usual inner product in $\C^n$, the (already known) inequality 
$$|G'(0)|\le2|G(0)|\log\f1{|G(0)|}$$ 
reduces to \eqref{eqn:schpizero}; the latter is thus established for every $n$. 
\par Finally, to prove \eqref{eqn:schpi} in full generality, we fix a nonzero point $a\in\mathbb B_n$ and 
consider the automorphism $\ph_a$ of $\mathbb B_n$ that interchanges $a$ and $0$. This is given by 
$$\ph_a(z)=\f{a-P_az-(1-|a|^2)^{1/2}Q_az}{1-\langle z,a\rangle},\qquad z\in\mathbb B_n,$$ 
where $P_a$ is the orthogonal projection of $\C^n$ onto the one-dimensional subspace spanned by $a$, and 
$Q_a=I-P_a$ is the complementary projection. Then we define 
\begin{equation}\label{eqn:pozorpes}
F(z):=(g\circ\ph_a)(z),\qquad z\in\mathbb B_n,
\end{equation}
so that $F$ is a holomorphic function on $\mathbb B_n$ satisfying $0<|F|\le1$ there. An application of 
\eqref{eqn:schpizero}, with $F$ in place of $g$, yields 
\begin{equation}\label{eqn:schpizerofff}
|\nabla F(0)|\le2|F(0)|\log\f1{|F(0)|}.
\end{equation} 
Differentiating \eqref{eqn:pozorpes} and bearing in mind that $\ph_a\circ\ph_a$ is the identity map, we find that 
\begin{equation}\label{eqn:golopuzoe}
\nabla g(a)=\nabla F(0)\cdot\left(\ph'_a(0)\right)^{-1}=\nabla F(0)\cdot\ph'_a(a), 
\end{equation}
where the gradients are interpreted as row vectors, while $\ph'_a$ stands for the appropriate Jacobian 
matrix. The formula 
$$\ph'_a(a)=-(1-|a|^2)^{-1}P_a-(1-|a|^2)^{-1/2}Q_a$$ 
(see \cite[Section 2.2]{R}) implies readily that $\|\ph'_a(a)\|$, the operator norm of the matrix $\ph'_a(a)$, 
is bounded by $(1-|a|^2)^{-1}$. It now follows from \eqref{eqn:golopuzoe} that 
\begin{equation}\label{eqn:golozadoe}
|\nabla g(a)|\le\f1{1-|a|^2}\cdot|\nabla F(0)|.
\end{equation}
Finally, we notice that $F(0)=g(a)$ and combine \eqref{eqn:schpizerofff} with \eqref{eqn:golozadoe} to obtain 
$$|\nabla g(a)|\le\f2{1-|a|^2}|g(a)|\log\f1{|g(a)|}.$$ 
This is precisely \eqref{eqn:schpi}, with $a$ in place of $z$, and we are done. 
\end{proof}

\section{Proof of Theorem \ref{thm:mare}}

(i)$\implies$(ii). Fix $z\in\mathbb B_n$ and let $z_1,z_2\in B_z$. Clearly, 
\begin{equation}\label{eqn:gusi}
|f(z_1)|-|f(z_2)|\le|f(z_1)-f(z_2)|\le\int_{[z_1,z_2]}|\nabla f(\ze)|\,|d\ze|,
\end{equation}
where $[z_1,z_2]$ denotes the segment with endpoints $z_1$ and $z_2$. Since $f\in\mathcal B_\om$, we have 
\begin{equation}\label{eqn:twoineq}
|\nabla f(\ze)|\le C\f{\om(d_\ze)}{d_\ze}\le C_1\f{\om(d_z)}{d_z},\qquad\ze\in[z_1,z_2],
\end{equation}
where $C$ and $C_1$ are suitable constants. (The last inequality in \eqref{eqn:twoineq} is due to 
the fact that $\f12d_z\le d_\ze\le\f32d_z$ for $\ze\in B_z$, combined with the hypothesis on $\om$.) Using 
\eqref{eqn:twoineq} to estimate the integral in \eqref{eqn:gusi}, while noting that the length of $[z_1,z_2]$ 
is at most $d_z$, we obtain 
$$|f(z_1)|-|f(z_2)|\le C_1\om(d_z),$$ 
which proves (ii). 

\par (ii)$\implies$(iii). Let $z\in E(=E_f)$, so that $B_z$ contains a point $z_0$ with $f(z_0)=0$. For $w\in B_z$, 
(ii) yields 
$$|f(w)|=|f(w)|-|f(z_0)|\le C_1\om(d_z),$$
whence 
\begin{equation}\label{eqn:lobok}
M_f(z)\le C_1\om(d_z),\qquad z\in E.
\end{equation}
On the other hand, (ii) tells us that 
$$M_f(z)-|f(z)|\le C_1\om(d_z),\qquad z\in\mathbb B_n,$$ 
and we combine this with the elementary inequality 
$$a\log\f{b}{a}\le b-a\qquad(0<a<b)$$ 
to deduce that 
\begin{equation}\label{eqn:kolobok}
|f(z)|\log\f{M_f(z)}{|f(z)|}\le C_1\om(d_z),\qquad z\in\mathbb B_n.
\end{equation}
Finally, \eqref{eqn:lobok} and \eqref{eqn:kolobok} together imply (iii). 

\par (iii)$\implies$(iv). This is obvious. 

\par (iv)$\implies$(i). Let $z\in E$. From (iv) we know that 
$$M_f(z)\le C\cdot\om(d_z),$$ 
with some fixed $C>0$, and we deduce (e.g., by applying the Cauchy formula to a suitable polydisk centered 
at $z$ and contained in $B_z$) that 
$$|\partial_jf(z)|\le\const\cdot\f{\om(d_z)}{d_z}\qquad(j=1,\dots,n).$$
Hence 
\begin{equation}\label{eqn:pupochek}
|\nabla f(z)|\le\const\cdot\f{\om(d_z)}{d_z},
\end{equation}
possibly with another constant on the right. 
\par Now assume that $z\in E^c(=\mathbb B_n\setminus E)$, so that $f$ has no zeros in $B_z$. The function 
$$g_z(w):=f\left(z+\f{d_z}2w\right)/M_f(z),\qquad w\in\mathbb B_n,$$
is then zero-free and bounded in modulus by $1$ on $\mathbb B_n$. An application of Lemma \ref{lem:spnonvan} gives 
$$|\nabla g_z(0)|\le2|g_z(0)|\log\f1{|g_z(0)|},$$
or equivalently, 
\begin{equation}\label{eqn:sosochek}
\f{d_z}2|\nabla f(z)|\le2|f(z)|\log\f{M_f(z)}{|f(z)|}.
\end{equation}
By (iv), there is a $C>0$ such that 
$$|f(z)|\log\f{M_f(z)}{|f(z)|}\le C\cdot\om(d_z),$$ 
and combining this with \eqref{eqn:sosochek} we arrive at \eqref{eqn:pupochek}, this time for $z\in E^c$. 
Thus \eqref{eqn:pupochek} actually holds for all $z\in\mathbb B_n$, and the proof is complete.

\medskip


\begin{thebibliography}{12}

\bibitem{A} L. V. Ahlfors, {\it Conformal invariants: topics in geometric function theory}, McGraw-Hill Series 
in Higher Mathematics, McGraw-Hill Book Co., New York--D\"usseldorf--Johannesburg, 1973. 

\bibitem{B} C. J. Bishop, {\it Bounded functions in the little Bloch space}, Pacific J. Math. \textbf{142} (1990), 
209--225. 

\bibitem{DActa} K. M. Dyakonov, {\it Equivalent norms on Lipschitz-type spaces of holomorphic functions}, 
Acta Math. \textbf{178} (1997), 143--167. 

\bibitem{DMich} K. M. Dyakonov, {\it Besov spaces and outer functions}, Michigan Math. J. \textbf{45} (1998), 
143--157. 

\bibitem{DJAM} K. M. Dyakonov, {\it Multiplicative structure in weighted BMOA spaces}, J. Anal. Math. 
\textbf{75} (1998), 85--104. 

\bibitem{DAdv} K. M. Dyakonov, {\it Holomorphic functions and quasiconformal mappings with smooth moduli}, 
Adv. Math. \textbf{187} (2004), 146--172. 

\bibitem{G} J. B. Garnett, {\it Bounded analytic functions}, Revised first edition, Springer, New York, 2007. 

\bibitem{P} M. Pavlovi\'c, {\it On Dyakonov's paper \lq\lq Equivalent norms on Lipschitz-type spaces 
of holomorphic functions"}, Acta Math. \textbf{183} (1999), 141--143. 

\bibitem{R} W. Rudin, {\it Function theory in the unit ball of $\mathbb C^n$}, Reprint of the 1980 edition, 
Classics in Mathematics, Springer-Verlag, Berlin, 2008. 

\bibitem{ShiTr} N. A. Shirokov, {\it Ideals and factorization in algebras of analytic functions that are smooth 
up to the boundary}, Trudy Mat. Inst. Steklov. \textbf{130} (1978), 196--222. 

\bibitem{Shi} N. A. Shirokov, {\it Analytic functions smooth up to the boundary}, Lecture Notes in Mathematics, 
vol. 1312, Springer-Verlag, Berlin, 1988. 

\end{thebibliography}
\end{document}